\newtheorem{theorem}{Theorem}[section]
\newtheorem{corollary}[theorem]{Corollary}
\newtheorem{lemma}[theorem]{Lemma}
\newtheorem{proposition}[theorem]{Proposition}
\theoremstyle{definition}
\newtheorem{definition}[theorem]{Definition}
\newcommand{\Uq}{U_q(\mathfrak{sl}_2)}
\newcommand{\Unegq}{U_{-q}(\mathfrak{sl}_2)}
\newcommand{\TL}{\mathbf{TL}}
\newcommand{\TLdot}{\mathbf{TL^\bullet}}
\newcommand{\id}{\mathrm{id}}
\begin{document}

\markboth{Stephen J. Bigelow}
{A diagrammatic definition of $U_q(\mathfrak{sl}_2)$}


\title{A DIAGRAMMATIC DEFINITION OF $U_q(\mathfrak{sl}_2)$}

\author{STEPHEN J. BIGELOW}

\address{Department of Mathematics,
University of California at Santa Barbara,
Santa Barbara, California 93106}

\maketitle

\begin{abstract}
We give a diagrammatic definition of $\Uq$
when $q$ is not a root of unity,
including the Hopf algebra structure
and relationship with the Temperley-Lieb category.
\end{abstract}

\keywords{Quantum groups, knot diagrams, skein theory.}


\section{Introduction}
This paper is about $\Uq$,
one of the simplest examples of a quantum group.
For an account of the early history of quantum groups
and some of their applications,
see \cite{jones}.
Our goal is to give
a definition of $\Uq$ and its representation theory
using formal linear combinations of certain diagrams in the plane.
This diagrammatic approach to algebra
has origins that go back to the use of Feynman diagrams in physics.
For a survey of this and some of its varied applications,
see \cite{BS}.

The Temperley-Lieb category $\TL$ is
a category of certain representations of $\Uq$.
The morphisms are represented
by linear combinations of Temperley-Lieb diagrams.
We will define a category $\TLdot$ that contains $\TL$,
but also allows diagrams with interior endpoints and orientations.

Next we will define a Hopf algebra $H$,
whose diagrams include a vertical pole.
If we work over $\mathbb{C}$ and $q$ is not a root of unity
then we find $\Uq$ as a subalgebra of a quotient of $H$.
The relationship between $\Uq$ and $\TL$
is described by the process of
threading a Temperley-Lieb diagram
in place of the pole in a diagram in $H$.

Orientations appeared in
the earliest applications of Temperley-Lieb diagrams
to ice-type models in statistical mechanics,
such as \cite{lieb}.
The orientations in $\TLdot$ are very similar,
and also satisfy the {\em ice rule},
which says that every crossing has
two arrows pointing in and two pointing out.

Orientations again appeared in work of
Frenkel and Khovanov \cite{FK}.
Their idea is that,
whereas a Temperley-Lieb diagram represents a linear map
between representations of $\Uq$,
an oriented Temperley-Lieb diagram
represents a single matrix entry of that linear map.
Such diagrams form a category
that is basically the same as our $\TLdot$.

Although it does not use the same oriented diagrams,
similar ideas are covered in the ``Kyoto path model'',
pioneered by Kashiwara \cite{kashiwara} and others.

More recently,
Lauda \cite{lauda} used diagrammatic methods
to define a categorified $\Uq$.
It is not clear whether our definitions can also be categorified,
or how they relate to Lauda's.

The main feature of our work that seems to be new is the pole,
which will let us describe both $\Uq$ and its representation theory
in the same picture.
One advantage of this is that the key
``intertwining'' relationship becomes visually obvious.
It is proved in Theorem \ref{thm:intertwine}
by physically sliding one action through the other.
This is reminiscent of Morton's diagrammatic proofs that
certain elements of the Temperley-Lieb algebra commute \cite{morton}.
I hope our approach makes the algebra more accessible to others like me
who are more comfortable with tangle diagrams and skein relations.

Throughout the paper,
we work over a field $\mathbb{F}$ containing an element $q$
that is neither $0$ nor $\pm 1$.
We will also need square roots $\sqrt{q}$ and $\sqrt{-q}$.

\section{The category $\TLdot$}

In this section,
we define a monoidal category $\TLdot$.

We start with a quick review
of the Temperley-Lieb category $\TL$.
The objects are the non-negative integers.
The morphisms from $n$ to $m$
are formal linear combinations
of Temperley-Lieb diagrams
that have $n$ endpoints at the bottom and $m$ at the top.
Composition is by stacking.
A closed loop can be deleted
in exchange for the scalar $q + q^{-1}$.
The tensor product of objects is given by
$n \otimes m = n + m$.
The tensor product $f \otimes g$
of two diagrams $f$ and $g$
is obtained by placing $f$ to the left of $g$.

We also allow diagrams with crossings,
which are defined as follows:
$$
\raisebox{-1.2em}{
    \begin{picture}(3,3)
    \put(0,0){\line(1,1){3}}
    \put(3,0){\line(-1,1){1.2}}
    \put(0,3){\line(1,-1){1.2}}
    \end{picture}
}
= \sqrt{-q}
\raisebox{-1.2em}{
    \begin{picture}(3,3)
    \qbezier(0,0)(1.5,1.5)(0,3)
    \qbezier(3,0)(1.5,1.5)(3,3)
    \end{picture}
}
+ \frac{1}{\sqrt{-q}}
\raisebox{-1.2em}{
    \begin{picture}(3,3)
    \qbezier(0,0)(1.5,1.5)(3,0)
    \qbezier(0,3)(1.5,1.5)(3,3)
    \end{picture}
}.$$
Crossings satisfy Reidemeister moves two and three
(as proved in \cite{kauffman}).

We extend $\TL$ to $\TLdot$ by introducing
diagrams with univalent vertices.
A vertex is the endpoint of a strand,
lying in the interior of the diagram.
We require that,
at every vertex,
the strand must have a horizontal tangent vector,
and must be given an orientation either into or out of the vertex.
Unlike ordinary Temperley-Lieb diagrams,
a diagram with vertices
is not considered up to planar isotopy.
Instead, we only allow planar isotopies
that preserve the horizontal tangent vector at each vertex.
We also impose the following
{\em turning},
{\em confetti},
and {\em cutting} relations.

The {\em turning relations} let us rotate a vertex 
at the expense of a power of $\sqrt{q}$.
\[
\sqrt{q} \raisebox{-0.8em}{
    \begin{picture}(2,2)
    \put(2,1){\vector(-1,0){1}}
    \put(1,1.5){\oval(2,1)[l]}
    \put(1,2){\line(1,0){0.5}}
    \put(1.5,2){\circle*{0.3}}
    \end{picture}
}
= \raisebox{-0.8em}{
    \begin{picture}(2,2)
    \put(2,1){\vector(-1,0){1}}
    \put(2,1){\line(-1,0){2}}
    \put(0,1){\circle*{0.3}}
    \end{picture}
}
= \frac{1}{\sqrt{q}} \raisebox{-0.8em}{
    \begin{picture}(2,2)
    \put(2,1){\vector(-1,0){1}}
    \put(1,0.5){\oval(2,1)[l]}
    \put(1,0){\line(1,0){0.5}}
    \put(1.5,0){\circle*{0.3}}
    \end{picture}
},
\quad
\frac{1}{\sqrt{q}} \raisebox{-0.8em}{
    \begin{picture}(2,2)
    \put(2,1){\line(-1,0){1}}
    \put(1.6,1){\vector(1,0){0}}
    \put(1,1.5){\oval(2,1)[l]}
    \put(1,2){\line(1,0){0.5}}
    \put(1.5,2){\circle*{0.3}}
    \end{picture}
}
= \raisebox{-0.8em}{
    \begin{picture}(2,2)
    \put(2,1){\line(-1,0){1}}
    \put(1.6,1){\vector(1,0){0}}
    \put(2,1){\line(-1,0){2}}
    \put(0,1){\circle*{0.3}}
    \end{picture}
}
= \sqrt{q} \raisebox{-0.8em}{
    \begin{picture}(2,2)
    \put(2,1){\line(-1,0){1}}
    \put(1.6,1){\vector(1,0){0}}
    \put(1,0.5){\oval(2,1)[l]}
    \put(1,0){\line(1,0){0.5}}
    \put(1.5,0){\circle*{0.3}}
    \end{picture}
}.
\]

The {\em confetti relations} let us 
eliminate any straight strand
that has univalent vertices at both ends.
\[
\raisebox{-0.3em}{
    \begin{picture}(3,1)
    \put(0,0.5){\circle*{0.3}}
    \put(3,0.5){\circle*{0.3}}
    \put(0,0.5){\line(1,0){3}}
    \put(0,0.5){\vector(1,0){1.0}}
    \put(3,0.5){\vector(-1,0){1.0}}
    \end{picture}
}
=
\raisebox{-0.3em}{
    \begin{picture}(3,1)
    \put(0,0.5){\circle*{0.3}}
    \put(3,0.5){\circle*{0.3}}
    \put(0,0.5){\line(1,0){3}}
    \put(1.5,0.5){\vector(1,0){1.0}}
    \put(1.5,0.5){\vector(-1,0){1.0}}
    \end{picture}
}
= 0,
\quad
\raisebox{-0.3em}{
    \begin{picture}(2,1)
    \put(0,0.5){\circle*{0.3}}
    \put(2,0.5){\circle*{0.3}}
    \put(0,0.5){\line(1,0){2}}
    \put(0,0.5){\vector(1,0){1.0}}
    \end{picture}
}
= 1.
\]

The {\em cutting relation} lets us 
replace a strand with a sum of ``cut'' strands
with the two possible orientations.
\[
\raisebox{-0.3em}{
    \begin{picture}(3,1)
    \put(0,0.5){\line(1,0){3}}
    \end{picture}
}
=
\raisebox{-0.3em}{
    \begin{picture}(4,1)
    \put(0,0.5){\line(1,0){1.5}}
    \put(0,0.5){\vector(1,0){1}}
    \put(1.5,0.5){\circle*{0.3}}
    \put(2.5,0.5){\circle*{0.3}}
    \put(2.5,0.5){\line(1,0){1.5}}
    \put(2.5,0.5){\vector(1,0){1}}
    \end{picture}
}
+
\raisebox{-0.3em}{
    \begin{picture}(4,1)
    \put(4,0.5){\line(-1,0){1.5}}
    \put(4,0.5){\vector(-1,0){1}}
    \put(2.5,0.5){\circle*{0.3}}
    \put(1.5,0.5){\circle*{0.3}}
    \put(1.5,0.5){\line(-1,0){1.5}}
    \put(1.5,0.5){\vector(-1,0){1}}
    \end{picture}
}.
\]

Note that univalent vertices
do not interact particularly well with crossings.
There is no relation to let you
pass a strand over or under a vertex,
and the orientation on a strand
may change when it goes through a crossing.

\section{The Hopf algebra $H$}

In this section,
we define a Hopf algebra $H$
consisting of formal linear combinations of certain diagrams.
The diagrams in $H$ are similar to those in $\TLdot$,
but with a special straight vertical edge called the {\em pole}.
No other strands are allowed to have endpoints
on the top or bottom of the diagram.
Strands are allowed to cross over or under the pole.
You can think of the pole as a kind of place holder.
In the next section we will replace it with
arbitrary numbers of parallel strands.

The turning, confetti, and cutting relations from $\TLdot$
still hold in $H$.
We also allow Reidemeister moves involving the pole.
That is,
we impose the relations
\[
\raisebox{-1.3em}{
    \begin{picture}(3,3)
    \put(0,1){\line(1,0){1.7}}
    \put(0,2){\line(1,0){1.7}}
    \put(2.3,1.5){\oval(1.4,1)[r]}
    \linethickness{2pt}
    \put(2,0){\line(0,1){3}}
    \end{picture}
}
=
\raisebox{-1.3em}{
    \begin{picture}(2,3)
    \put(0,1.5){\oval(2.5,1)[r]}
    \linethickness{2pt}
    \put(2,0){\line(0,1){3}}
    \end{picture}
}
=
\raisebox{-1.3em}{
    \begin{picture}(3,3)
    \put(0,1){\line(1,0){2}}
    \put(0,2){\line(1,0){2}}
    \put(2,1.5){\oval(2,1)[r]}
    \linethickness{2pt}
    \put(2,0){\line(0,1){0.7}}
    \put(2,1.3){\line(0,1){0.4}}
    \put(2,3){\line(0,-1){0.7}}
    \end{picture}
},
\qquad
\raisebox{-1.8em}{
    \begin{picture}(4,4)
    \qbezier(0,1)(1,1)(1.5,2)
    \qbezier(1.5,2)(2,3)(3,3)
    \put(4,3){\line(-1,0){1}}
    \qbezier(0,3)(1,3)(1.3,2.4)
    \qbezier(1.7,1.4)(2,1)(2.7,1)
    \put(4,1){\line(-1,0){0.7}}
    \linethickness{2pt}
    \put(3,0){\line(0,1){2.7}}
    \put(3,4){\line(0,-1){0.7}}
    \end{picture}
} =
\raisebox{-1.8em}{
    \begin{picture}(4,4)
    \put(0,1){\line(1,0){1}}
    \qbezier(1,1)(2,1)(2.5,2)
    \qbezier(2.5,2)(3,3)(4,3)
    \put(0,3){\line(1,0){0.7}}
    \qbezier(1.3,3)(2,3)(2.3,2.6)
    \qbezier(2.7,1.6)(3,1)(4,1)
    \linethickness{2pt}
    \put(1,0){\line(0,1){0.7}}
    \put(1,4){\line(0,-1){2.7}}
    \end{picture}
},
\]
and their horizontal reflections.
(Other versions of Reidemeister three
follow from Reidemeister two and the definition of a crossing.)

The product $\nabla \colon H \otimes H \to H$
is such that,
for diagrams $x$ and $y$,
$\nabla(x \otimes y)$ is obtained by stacking $x$ on top of $y$.
We write $xy$ for $\nabla(x \otimes y)$.

The unit $\eta \colon \mathbb{F} \to H$
is such that $\eta(1)$ is
the diagram that is empty except for the pole.

The tensor product $x \otimes y$
of two diagrams $x$ and $y$ in $H$
is obtained by placing $x$ to the left of $y$,
resulting in a diagram with two poles.
In general,
any diagram $z$ with two poles
represents an element of $H \otimes H$.
If $z$ contains strands that go from one pole to the other,
then use the cutting relation
to write $z$ as a sum of tensor products
of diagrams from $H$.

The coproduct $\Delta \colon H \to H \otimes H$
acts on any diagram
by splitting the pole into two parallel poles.
Every crossing where a strand passes over (or under) the pole
becomes a pair of crossings
where the strand passes over (or under) both poles.

The counit $\epsilon$
acts on any diagram $x$
by deleting the pole.
The result is a scalar multiple of the empty diagram in $\TLdot$,
and $\epsilon(x)$ is defined to be that scalar.

The antipode $S \colon H \to H$
acts on any diagram by a planar isotopy
that rotates the pole clockwise
through an angle of $180$ degrees.
Throughout the isotopy,
we must preserve the horizontal tangent vectors at every vertex.
The result is the same as rigidly rotating the diagram
and then multiplying the result by
$\sqrt{q}$ to the power of
the number of
inward oriented vertices
minus the number of
outward oriented vertices.

\begin{proposition}
$H$ satisfies the axioms of a Hopf algebra.
\end{proposition}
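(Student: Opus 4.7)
The plan is to verify the Hopf algebra axioms in four stages: well-definedness of the structure maps on the quotient by the relations, the algebra and coalgebra axioms, the bialgebra compatibility, and finally the antipode identities. The first three stages should be essentially visual, and the antipode identity is the step requiring genuine work.

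First I would check that each of $\nabla$, $\eta$, $\Delta$, $\epsilon$, $S$ descends to the quotient by the turning, confetti, cutting, and pole-Reidemeister relations. For $\nabla$, $\eta$, and $\epsilon$ this is automatic, since those operations are built from stacking, inserting the empty-pole diagram, or deleting the pole, none of which disturbs any local subdiagram. For $\Delta$, a pole-Reidemeister move becomes two parallel copies of the same move on adjacent poles, each of which is already imposed in $H \otimes H$. For $S$, the underlying operation is a planar isotopy that preserves horizontal tangents, and the $\sqrt{q}$-power correction is precisely what is needed so that the turning relations are sent to turning relations after rotation.

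Next, the algebra and coalgebra axioms follow by inspection: associativity of $\nabla$ is associativity of stacking; unitality is the fact that the empty-pole diagram is an identity for stacking up to planar isotopy; coassociativity holds because splitting the pole into three is unambiguous; and counitality holds because deleting one of two parallel poles obtained from $\Delta(x)$ isotopes back to a single pole. The bialgebra axioms $\Delta(xy) = \Delta(x)\Delta(y)$ and $\epsilon(xy) = \epsilon(x)\epsilon(y)$ are then visible from the diagrams: splitting the pole in a vertical stack is the same as stacking the split poles, and deleting the pole in a stack produces the product of the two scalars. The normalisations $\Delta(1) = 1 \otimes 1$ and $\epsilon(1) = 1$ are immediate.

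The main obstacle is the antipode identity $\nabla(S \otimes \id)\Delta = \eta\epsilon = \nabla(\id \otimes S)\Delta$. Geometrically, $\Delta$ places two parallel poles side by side, $S \otimes \id$ rotates the left pole (with its strands) through $180^\circ$ up to a $\sqrt{q}$-factor from the vertex-orientation count, and $\nabla$ stacks the result above the right factor on a common pole. My plan is to interpret this composite picture directly: each strand which crossed the pole of $x$ now traces a closed arc around the top of the composite pole. Using the pole-Reidemeister relations to slide each arc off the pole, and then cutting and confetti to evaluate what remains, one finds that all terms vanish except those in which no strand originally crossed the pole, leaving the empty-pole diagram multiplied by the scalar obtained by simply erasing the pole of $x$, namely $\epsilon(x)$. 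The hard part is keeping the $\sqrt{q}$-factors from the antipode definition and from the turning relations consistent through this reduction; the cleanest way is to verify the identity on a single generator (one over-strand or one under-strand across the pole, in each of the two orientations) and then extend by the compatibility of all five structure maps with the monoidal product. The $(\id \otimes S)$ identity follows by the same argument with left and right interchanged.
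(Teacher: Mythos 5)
Your proposal follows essentially the same route as the paper: the routine bialgebra checks are dispatched by inspection, and the antipode identity is established by rotating the left copy of the pole on top of the right one and sliding the resulting arcs off the pole with Reidemeister two, recovering $\eta\circ\epsilon$. One sentence is loosely worded --- after the Reidemeister moves nothing ``vanishes''; the strands that crossed the pole simply slide off and contribute to the scalar $\epsilon(x)$ (for the generator $k$, say, every term of $\Delta(k)$ has a strand crossing the pole, yet $\epsilon(k)=1$) --- but the conclusion you state, that one is left with the empty-pole diagram times the scalar obtained by erasing the pole of $x$, is exactly the paper's argument.
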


\begin{proof}
It is easy to check that $H$ satisfies
the axioms of a bialgebra.
It remains to check that the antipode satisfies:
$$\nabla \circ (S \otimes \id) \circ \Delta
= \eta \circ \epsilon
= \nabla \circ (\id \otimes S) \circ \Delta.
$$

Here is a schematic representation
of the effect of
$\nabla \circ (S \otimes \id) \circ \Delta$
on a diagram:
\[
\raisebox{-1.3em}{
    \begin{picture}(2,3)
    \put(0,1){\line(1,0){2}} \put(0,1){\line(0,1){1}}
    \put(2,2){\line(-1,0){2}} \put(2,2){\line(0,-1){1}}
    \linethickness{2pt}
    \put(1,0){\line(0,1){1}} \put(1,3){\line(0,-1){1}}
    \end{picture}
}
\xmapsto{\Delta}
\raisebox{-1.3em}{
    \begin{picture}(3,3)
    \put(0,1){\line(1,0){3}} \put(0,1){\line(0,1){1}}
    \put(3,2){\line(-1,0){3}} \put(3,2){\line(0,-1){1}}
    \linethickness{2pt}
    \put(1,0){\line(0,1){1}} \put(1,3){\line(0,-1){1}}
    \put(2,0){\line(0,1){1}} \put(2,3){\line(0,-1){1}}
    \end{picture}
}
\xmapsto{S \otimes \id}
\raisebox{-1.8em}{
    \begin{picture}(3,3)
    \put(3,0.5){\line(0,1){1}} \put(3,0.5){\line(-1,0){1}}
    \put(3,1.5){\line(-1,0){1}} \put(3,2.5){\line(0,1){1}}
    \put(3,2.5){\line(-1,0){1}} \put(3,3.5){\line(-1,0){1}}
    \put(2,2){\oval(2,1)[l]} \put(2,2){\oval(4,3)[l]}
    \linethickness{2pt}
    \put(2,0){\line(0,1){0.5}} \put(2,1.5){\line(0,1){0.3}}
    \put(2,2.5){\line(0,-1){0.3}} \put(2,3.5){\line(0,1){0.5}}
    \end{picture}
}
\xmapsto{\nabla}
\raisebox{-1.8em}{
    \begin{picture}(3,3)
    \put(3,0.5){\line(0,1){1}} \put(3,0.5){\line(-1,0){1}}
    \put(3,1.5){\line(-1,0){1}} \put(3,2.5){\line(0,1){1}}
    \put(3,2.5){\line(-1,0){1}} \put(3,3.5){\line(-1,0){1}}
    \put(2,2){\oval(2,1)[l]} \put(2,2){\oval(4,3)[l]}
    \linethickness{2pt}
    \put(2,0){\line(0,1){0.5}} \put(2,1.5){\line(0,1){1}}
    \put(2,3.5){\line(0,1){0.5}}
    \end{picture}
}.\]
First $\Delta$ splits the pole into two.
Then $S \otimes \id$
rotates the left pole clockwise,
bringing it above the other pole.
As usual,
the vertices do not rotate throughout this isotopy.
Although the resulting diagram is oddly shaped
and has one pole on top of the other,
it still represents an element of $H \otimes H$
by the same construction
as when the poles are side by side.
Finally,
$\nabla$ joins the two poles
so that the tensor product becomes multiplication by stacking.

Consider the last of the above sequence of four diagrams.
The curved part of the rectangle
represents a collection of parallel strands
that can be moved off the pole, one by one,
using Reidemeister two.
Thus the entire collection of strands
can be slid off the pole to the right.
We can then use an isotopy
to straighten out the rectangle again.
The overall effect
is to delete the pole from the original diagram
and insert a new pole some distance to the left.
But this exactly describes the action of $\eta \circ \epsilon$.
Thus
$$\nabla \circ (S \otimes \id) \circ \Delta
= \eta \circ \epsilon.$$

An upside-down version of this argument works for
$\nabla \circ (\id \otimes S) \circ \Delta$.
\end{proof}

\section{Representations of $H$}

If $h$ is a diagram in $H$,
let $\rho(h)$ be the diagram in $\TLdot$
given by replacing the pole in $h$ with an ordinary strand.
Extend $\rho$ by linearity
to an algebra morphism from $H$
to the algebra of automorphisms of the object $1$ in $\TLdot$.
Note that $\rho$ is a two-dimensional representation of $H$,
since its codomain is isomorphic to the algebra of
two-by-two matrices over $\mathbb{F}$.

Using the coproduct on $H$,
if $h$ is a diagram in $H$
then $\rho^{\otimes n}(h)$ is the diagram in $\TLdot$
given by threading $n$ parallel strands in place of the pole.

The most important relationship between $H$ and $\TL$
is that their actions  ``intertwine'' as follows.

\begin{theorem}
\label{thm:intertwine}
Suppose $h \in H$
and $f$ is a morphism in $\TL$ from $n$ to $m$.
Then
$$\rho^{\otimes m}(h) \circ f = f \circ \rho^{\otimes n}(h)$$
in the category $\TLdot$.
\end{theorem}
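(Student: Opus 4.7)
My plan is to prove the theorem by a diagrammatic sliding argument, as suggested in the introduction. By bilinearity in both $h$ and $f$, I reduce to the case where $h$ is a single diagram in $H$ and $f$ is a single Temperley--Lieb diagram, i.e., a planar pairing of the $n+m$ boundary points.

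The two sides of the desired identity differ only in where $f$ sits relative to the threaded-$h$ region: in $\rho^{\otimes m}(h) \circ f$, the diagram $f$ sits below; in $f \circ \rho^{\otimes n}(h)$, it sits above. The strategy is to transform the first into the second by physically sliding $f$ upward, past one ``feature'' of $h$ at a time. Features of $h$ that do not touch the pole (closed loops, non-pole cups or caps, vertices on strands that avoid the pole) can be isotoped out of $f$'s path and contribute no interaction. The substantive case is a strand $S$ of $h$ crossing the pole, which after threading becomes a strand crossing every strand of the bundle.

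For such a strand $S$, the key observation is: when $f$ is below $S$, the bundle at $S$'s level has $m$ strands (matching $f$'s $m$ outputs) and $S$ crosses them $m$ times; when $f$ is above $S$, the bundle at $S$'s level has $n$ strands (matching $f$'s $n$ inputs) and $S$ crosses them $n$ times. Sliding $f$ past $S$ is achieved by a planar isotopy combined with Reidemeister~2 moves (valid in $\TLdot$ by Kauffman's theorem, cited earlier). Concretely, each upper arc of $f$ loses two of its crossings with $S$ as the arc's lowest point passes the level of $S$ from above (this is R2), each lower arc of $f$ gains two new crossings with $S$ as the arc's highest point passes $S$ from above (R2 in reverse), and each through-strand of $f$ simply exchanges the side of $S$ on which it lies. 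Writing $u$ and $l$ for the numbers of upper and lower arcs of $f$, the arithmetic $m - 2u + 2l = n$ confirms that the post-slide crossing count matches what $f \circ \rho^{\otimes n}(h)$ demands at this level.

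The main obstacle is organizing the local sliding cleanly: tracking the over/under convention at each crossing (preserved by R2), respecting the horizontal tangent requirements at vertices of $h$ (undisturbed because such vertices lie on non-pole strands that $f$ is never forced to cross), and verifying that the cutting, turning, and confetti relations are unaffected by the isotopy. Once sliding past one pole-crossing strand is justified, a straightforward induction on the features of $h$ (processed from bottom to top) completes the argument; the analogous argument with the opposite over/under convention is identical.
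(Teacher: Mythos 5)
Your argument is correct and is essentially the paper's own proof, which simply asserts that $f$ can be slid through $h$ using Reidemeister moves; you have supplied the bookkeeping the paper leaves implicit (the Reidemeister~2 count $m-2u+2l=n$ for caps, cups, and through-strands, and the non-interaction with vertices). One small slip in your justification: vertices of $h$ can perfectly well lie on strands that \emph{do} cross the pole (e.g.\ the generator $e$), but they remain undisturbed for the right reason anyway, namely that the vertices themselves sit off the pole and hence outside the threaded bundle where the sliding isotopy is supported.
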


\begin{proof}
We can assume $h$ and $f$ are diagrams.
To obtain $\rho^{\otimes m}(h) \circ f$,
replace the pole in $h$ with $m$ parallel strands
and attach $f$ to the bottom.
To obtain $f \circ \rho^{\otimes n}(h)$,
replace the pole in $h$ with $n$ parallel strands
and attach $f$ to the top.
The resulting diagrams represent the same element of $\TLdot$,
since we can use Reidemeister moves to slide $f$ through $h$.
\end{proof}

\section{Generators and relations in $H$} 

We define the following elements of $H$.
\[
\begin{aligned}
&e = \raisebox{-1.3em}{
    \begin{picture}(3,3)
    \put(0,1.5){\line(1,0){1.2}}
    \put(3,1.5){\line(-1,0){1.2}}
    \put(0.4,1.5){\vector(-1,0){0}}
    \put(2.6,1.5){\vector(1,0){0}}
    \put(0,1.5){\circle*{0.3}}
    \put(3,1.5){\circle*{0.3}}
    \linethickness{2pt}
    \put(1.5,0){\line(0,1){3}}
    \end{picture}
},
\quad
&e_0 = \raisebox{-1.3em}{
    \begin{picture}(3,3)
    \put(0,1.5){\line(1,0){1.2}}
    \put(3,1.5){\line(-1,0){1.2}}
    \put(0.9,1.5){\vector(1,0){0}}
    \put(2.1,1.5){\vector(-1,0){0}}
    \put(0,1.5){\circle*{0.3}}
    \put(3,1.5){\circle*{0.3}}
    \linethickness{2pt}
    \put(1.5,0){\line(0,1){3}}
    \end{picture}
},
\quad
&k = \raisebox{-1.3em}{
    \begin{picture}(3,3)
    \put(0,1.5){\line(1,0){1.2}}
    \put(3,1.5){\line(-1,0){1.2}}
    \put(0.9,1.5){\vector(1,0){0}}
    \put(2.6,1.5){\vector(1,0){0}}
    \put(0,1.5){\circle*{0.3}}
    \put(3,1.5){\circle*{0.3}}
    \linethickness{2pt}
    \put(1.5,0){\line(0,1){3}}
    \end{picture}
},
\quad
&k' = \raisebox{-1.3em}{
    \begin{picture}(3,3)
    \put(0,1.5){\line(1,0){1.2}}
    \put(3,1.5){\line(-1,0){1.2}}
    \put(0.4,1.5){\vector(-1,0){0}}
    \put(2.1,1.5){\vector(-1,0){0}}
    \put(0,1.5){\circle*{0.3}}
    \put(3,1.5){\circle*{0.3}}
    \linethickness{2pt}
    \put(1.5,0){\line(0,1){3}}
    \end{picture}
},
\\
&f = \raisebox{-1.3em}{
    \begin{picture}(3,3)
    \put(0,1.5){\line(1,0){3}}
    \put(0.9,1.5){\vector(1,0){0}}
    \put(2.1,1.5){\vector(-1,0){0}}
    \put(0,1.5){\circle*{0.3}}
    \put(3,1.5){\circle*{0.3}}
    \linethickness{2pt}
    \put(1.5,0){\line(0,1){1.2}}
    \put(1.5,3){\line(0,-1){1.2}}
    \end{picture}
},
\quad
&f_0 = \raisebox{-1.3em}{
    \begin{picture}(3,3)
    \put(0,1.5){\line(1,0){3}}
    \put(0.4,1.5){\vector(-1,0){0}}
    \put(2.6,1.5){\vector(1,0){0}}
    \put(0,1.5){\circle*{0.3}}
    \put(3,1.5){\circle*{0.3}}
    \linethickness{2pt}
    \put(1.5,0){\line(0,1){1.2}}
    \put(1.5,3){\line(0,-1){1.2}}
    \end{picture}
},
\quad
&\ell = \raisebox{-1.3em}{
    \begin{picture}(3,3)
    \put(0,1.5){\line(1,0){3}}
    \put(0.4,1.5){\vector(-1,0){0}}
    \put(2.1,1.5){\vector(-1,0){0}}
    \put(0,1.5){\circle*{0.3}}
    \put(3,1.5){\circle*{0.3}}
    \linethickness{2pt}
    \put(1.5,0){\line(0,1){1.2}}
    \put(1.5,3){\line(0,-1){1.2}}
    \end{picture}
},
\quad
&\ell' = \raisebox{-1.3em}{
    \begin{picture}(3,3)
    \put(0,1.5){\line(1,0){3}}
    \put(0.9,1.5){\vector(1,0){0}}
    \put(2.6,1.5){\vector(1,0){0}}
    \put(0,1.5){\circle*{0.3}}
    \put(3,1.5){\circle*{0.3}}
    \linethickness{2pt}
    \put(1.5,0){\line(0,1){1.2}}
    \put(1.5,3){\line(0,-1){1.2}}
    \end{picture}
}.
\end{aligned}
\]

\begin{lemma} \label{lem:Hgenerators}
$H$ is generated by the above eight elements.
\end{lemma}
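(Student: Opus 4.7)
The plan is to show that every diagram $x \in H$ can be rewritten, using the relations available in $H$, as an $\mathbb{F}$-linear combination of products (in the algebra $H$) of the eight listed generators. The strategy is to cut $x$ into simple horizontal slices and recognize each slice as a scalar multiple of one of the eight elements.

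First I would apply the definition of a crossing to resolve every strand-strand crossing in $x$, yielding a linear combination of diagrams whose only crossings are between strands and the pole. Next I would isotope each such diagram into general position so that all features (pole crossings, local vertical extrema of strands, and vertex endpoints) occur at distinct heights. Using the cutting relation at suitable horizontal-tangent points, I then cut every strand so that each resulting maximal segment is a simple arc with two univalent vertex endpoints crossing the pole at most once; in particular, each strand is cut between consecutive pole crossings, and each closed loop is cut at least once to turn it into an arc.

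At this point the diagram is a vertical stack of disjoint elementary segments at distinct heights. Each segment can be straightened to a canonical horizontal form by planar isotopy together with the turning relations (each application of turning contributing a $\sqrt{q}^{\pm 1}$ scalar). A straightened segment that does not cross the pole evaluates to $0$ or $1$ by the confetti relation. A straightened segment crossing the pole once is, up to a scalar, exactly one of the eight generators: there are two over/under choices and four possible arrow configurations at the pair of endpoints, giving exactly $e, e_0, k, k', f, f_0, \ell, \ell'$. Stacking then expresses $x$ as an $\mathbb{F}$-linear combination of products of these generators, proving the lemma.

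The main work is bookkeeping. One must justify that the cuts can always be placed so that each segment has at most one pole crossing (possible because, after isotoping to create a horizontal tangent, the cutting relation may be applied wherever we please between consecutive pole crossings), and one must keep careful track of the scalar factors produced by the turning and crossing relations. Conceptually, however, the lemma says nothing more than that the eight generators exhaust, up to scalars, the possible ``single horizontal strand with two vertex endpoints and one pole crossing'' configurations, of which there are manifestly $2 \times 4 = 8$.
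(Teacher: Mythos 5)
Your proof is correct and follows essentially the same route as the paper's: resolve strand--strand crossings, cut strands into segments crossing the pole at most once, straighten with the turning relations, eliminate pole-free segments with confetti, and recognize the remaining slices as the eight generators. The extra bookkeeping you flag (general position, scalar factors from turning) is exactly the detail the paper leaves implicit.
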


\begin{proof}
Start with an arbitrary diagram in $H$.
Apply the definition of a crossing
to eliminate any crossings that do not involve the pole.
Use the cutting relation
to cut all strands into segments
that cross the pole at most once.
Use the turning relations
to straighten out all of the strands.
Finally,
use the confetti relations
to eliminate any strands that do not cross the pole.
We are left with only
horizontal segments that cross the pole exactly once.
The eight generators consist of every pair of orientations
for either type of crossing.
\end{proof}

We now give the Hopf algebra structure of $H$.
To save space,
we only list the four generators
in which the strand passes under the pole.
These calculations remain the same 
if we switch the crossing.

\begin{lemma} \label{lem:Hhopf}
In $H$,
the coproduct satisfies:
$$
\begin{aligned}
\Delta(e) = e \otimes k + k' \otimes e,
\quad
&
\Delta(e_0) = e_0 \otimes k' + k \otimes e_0,
\\
\Delta(k) = k \otimes k + e_0 \otimes e,
\quad
&
\Delta(k') = k' \otimes k' + e \otimes e_0,
\end{aligned}
$$
the counit satisfies:
$$
\epsilon(e) = \epsilon(e_0) = 0,
\quad 
\epsilon(k) = \epsilon(k') = 1,
$$
and the antipode satisfies:
$$S(e) = qe,
\quad
S(e_0) = q^{-1}e_0,
\quad
S(k) = k',
\quad
S(k') = k.
$$
\end{lemma}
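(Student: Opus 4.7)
The plan is to apply each of $\Delta$, $\epsilon$, and $S$ directly to each of the four listed generators using the diagrammatic definitions from the previous section, and then simplify the resulting picture using the turning, confetti, and cutting relations. Because every formula asserted in the lemma is just an equality of diagrams in $H$ or $H \otimes H$, the whole proof reduces to a collection of purely local calculations.

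The counit is the easiest piece. For each generator $\epsilon$ simply deletes the pole, leaving a horizontal strand with two oriented univalent endpoints. The confetti relations then immediately give $0$ whenever the two arrows are incompatible with a single orientation of the merged strand (both vertices inward, as in $e$, or both outward, as in $e_0$), and $1$ whenever the two arrows are compatible (one inward and one outward vertex, as in $k$ and $k'$).

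For the antipode, I would count the inward and outward vertices of each generator to produce the prefactor $\sqrt{q}$ raised to (number inward $-$ number outward), which gives $q$, $q^{-1}$, $1$, $1$ for $e$, $e_0$, $k$, $k'$ respectively. Then I would carry out the rigid $180^\circ$ rotation of each diagram: $e$ and $e_0$ are invariant under this rotation, while it interchanges $k$ and $k'$. Combining the rotation with the prefactor yields the four antipode formulas directly.

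The coproduct is the most involved piece. In each generator, $\Delta$ splits the pole into two parallel poles so that the original horizontal strand now wraps around both of them. To exhibit the resulting two-pole diagram as an element of $H \otimes H$, I apply the cutting relation to the middle arc, which gives a sum of two terms, one for each orientation of the cut. The key step, and the place where the most care is needed, is to identify each of the two resulting pieces in each term with one of the generators by carefully reading off the arrow pattern at each vertex; for instance, in $\Delta(e)$ the rightward-oriented cut produces $e \otimes k$ and the leftward-oriented cut produces $k' \otimes e$, yielding the stated formula, and the remaining three cases are handled in precisely the same way. Finally, since the argument never distinguishes between a strand passing over or under the pole, the identical calculation proves the analogous formulas for the other four generators with the opposite crossing.
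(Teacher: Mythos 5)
Your proposal is correct and takes the same route as the paper, whose entire proof is the single sentence ``These follow immediately from the definitions''; you have simply written out the direct diagrammatic computations (confetti relations for $\epsilon$, rigid rotation with the $\sqrt{q}$-exponent for $S$, and pole-splitting followed by the cutting relation for $\Delta$) that the author leaves implicit, and your identifications of the resulting pieces with $e$, $e_0$, $k$, $k'$ all check out.
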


\begin{proof}
These follow immediately from the definitions.
\end{proof}

We list some relations satisfied by the generators of $H$.
We do not attempt a complete presentation of $H$,
since we will soon be taking a quotient anyway.

\begin{lemma} \label{lem:Hrelations}
$H$ satisfies the relations
\begin{itemize}
\item $k'k + q^{-1}ee_0 = 1$,
\item $kk' + qe_0e = 1$,
\item $ek' + qk'e = 0$,
\item $ef - fe = (q - q^{-1})(\ell k - k' \ell')$.
\end{itemize}
\end{lemma}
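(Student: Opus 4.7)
The plan is to prove each relation by drawing both sides as diagrams in $H$ and reducing one to the other using the cutting, turning, confetti, and pole-Reidemeister relations. The first two relations, $k'k + q^{-1}ee_0 = 1$ and $kk' + qe_0e = 1$, are to be handled together. Each of the four products involved consists of two horizontal under-pole strands stacked one above the other, differing only in the orientations at the four resulting vertices. The key observation is that on one side of the pole the vertex pair of $k'k$ and the vertex pair of $ee_0$ realise precisely the two orientation choices appearing on the right-hand side of the cutting relation. After a single turning-relation rotation that supplies the scalar factor $q^{-1}$ and aligns the tangent directions, running the cutting relation in reverse identifies the sum $k'k + q^{-1}ee_0$ with the diagram obtained by joining the upper and lower strands into a single arc on that side of the pole. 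A Reidemeister-2 move with the pole then removes the two under-pole crossings of this joined strand, and a further turning rotation straightens the remaining arc so that the confetti relation collapses it to $1$. The companion identity $kk' + qe_0e = 1$ is proved the same way with the roles of the two sides of the pole swapped, producing the scalar $q$ in place of $q^{-1}$.

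For $ek' + qk'e = 0$, the two products consist of the same pair of under-pole strands stacked in opposite vertical order. A planar isotopy combined with a Reidemeister-2 move on one side of the pole swaps the two strands. The orientations do not match up trivially, and a single turning-relation rotation during the swap supplies exactly the factor of $-q$ that makes the two diagrams negatives of one another.

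The main obstacle is the commutator relation $ef - fe = (q-q^{-1})(\ell k - k'\ell')$, since the strand of $e$ passes under the pole while the strand of $f$ passes over, ruling out any single pole-Reidemeister swap. My plan here is to apply the cutting relation to the horizontal strand of $f$ at a point adjacent to the pole, expressing $ef$ as a sum over the two orientations of the new cut vertices; after turning rotations align these with the vertices of $e$ on the same side, each summand regroups as a product of two generators drawn from $\{e,e_0,k,k',f,f_0,\ell,\ell'\}$. Computing $fe$ in the same way and subtracting causes most summands to cancel pairwise, leaving a combination that assembles into exactly $\ell k - k'\ell'$. The coefficient $(q-q^{-1})$ emerges as the difference $(\sqrt{q}\,)^{2} - (\sqrt{q}\,)^{-2}$ of turning-rotation factors collected on the two sides of the pole. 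The main difficulty is the careful bookkeeping: tracking which orientation combinations vanish by the confetti relation and which scalar factors each surviving combination carries.
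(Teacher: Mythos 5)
Your handling of the first three relations is essentially the paper's argument run in reverse. The paper starts from a Reidemeister-two identity --- two horizontal strands with vertices at their left ends, joined on the right by a cap passing around the pole --- and expands it into generators by cutting the cap and applying the turning and confetti relations; you start from the sum of products and reassemble it, via the cutting relation read backwards, into that same capped diagram before evaluating it with Reidemeister two and confetti. That direction of travel is legitimate, modulo careful bookkeeping of the powers of $\sqrt{q}$ contributed by the turnings.

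The fourth relation is where the proposal has a genuine gap. Applying the cutting relation to the strand of $f$ at a point adjacent to the pole is an identity inside a single diagram: one of the two orientation terms is killed by the confetti relation and the other reproduces $f$ with a vertex moved by an isotopy, so after the dust settles you have rewritten $ef$ as $ef$ and $fe$ as $fe$. No amount of cutting can change the vertical order in which the two strands meet the pole, and that order is precisely what distinguishes $ef$ from $fe$; nor can cutting manufacture the turnback configurations that produce $\ell k$ and $k'\ell'$. What is missing is a move that exchanges the heights of the two strands, and since both strands span the pole, any such exchange forces them to cross each other. The paper supplies this with the Reidemeister-three relation, sliding the crossing between the $e$- and $f$-strands from one side of the pole to the other, and then resolves that crossing on each side using the defining skein relation for crossings; the terms $\ell k$ and $k'\ell'$ and the coefficient $q-q^{-1}$ arise from the $\sqrt{-q}$ and $1/\sqrt{-q}$ skein coefficients combined with turning factors. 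The identity $(\sqrt{q})^{2}-(\sqrt{q})^{-2}=q-q^{-1}$ that you offer is numerically suggestive but is not produced by the procedure you describe. To repair the argument, introduce the crossing between the two strands explicitly and expand both sides of the Reidemeister-three identity.
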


\begin{proof}
The first three relations follow from Reidemeister two:
\[
\raisebox{-1.3em}{
    \begin{picture}(3,3)
    \put(0,2){\line(1,0){1.7}}
    \put(0,1){\line(1,0){1.7}}
    \put(0,2){\circle*{0.3}}
    \put(0,1){\circle*{0.3}}
    \put(1.7,2){\vector(-1,0){1.1}}
    \put(0,1){\vector(1,0){1.0}}
    \put(2.3,1.5){\oval(1.4,1)[r]}
    \linethickness{2pt}
    \put(2,0){\line(0,1){3}}
    \end{picture}
} = \sqrt{q},
\quad
\raisebox{-1.3em}{
    \begin{picture}(3,3)
    \put(0,2){\line(1,0){1.7}}
    \put(0,1){\line(1,0){1.7}}
    \put(0,2){\circle*{0.3}}
    \put(0,1){\circle*{0.3}}
    \put(0,2){\vector(1,0){1.0}}
    \put(1.7,1){\vector(-1,0){1.1}}
    \put(2.3,1.5){\oval(1.4,1)[r]}
    \linethickness{2pt}
    \put(2,0){\line(0,1){3}}
    \end{picture}
} = \frac{1}{\sqrt{q}},
\quad
\raisebox{-1.3em}{
    \begin{picture}(3,3)
    \put(0,2){\line(1,0){1.7}}
    \put(0,1){\line(1,0){1.7}}
    \put(0,2){\circle*{0.3}}
    \put(0,1){\circle*{0.3}}
    \put(1.7,2){\vector(-1,0){1.1}}
    \put(1.7,1){\vector(-1,0){1.1}}
    \put(2.3,1.5){\oval(1.4,1)[r]}
    \linethickness{2pt}
    \put(2,0){\line(0,1){3}}
    \end{picture}
} = 0.
\]
The fourth relation follows from Reidemeister three:
\[
\raisebox{-1.8em}{
    \begin{picture}(5,4)
    \put(0,1){\circle*{0.3}}
    \put(0,1){\line(1,0){0.5}}
    \put(0.9,1){\vector(1,0){0}}
    \qbezier(0.5,1)(1.5,1)(2,2)
    \qbezier(2,2)(2.5,3)(3.5,3)
    \put(5,3){\line(-1,0){1.5}}
    \put(5,3){\vector(-1,0){0.9}}
    \put(5,3){\circle*{0.3}}
    \put(0,3){\circle*{0.3}}
    \put(0,3){\line(1,0){0.5}}
    \put(0.5,3){\vector(-1,0){0}}
    \qbezier(0.5,3)(1.5,3)(1.8,2.4)
    \qbezier(2.2,1.4)(2.5,1)(3.2,1)
    \put(5,1){\line(-1,0){1.2}}
    \put(4.6,1){\vector(1,0){0}}
    \put(5,1){\circle*{0.3}}
    \linethickness{2pt}
    \put(3.5,0){\line(0,1){2.7}}
    \put(3.5,4){\line(0,-1){0.7}}
    \end{picture}
} =
\raisebox{-1.8em}{
    \begin{picture}(5,4)
    \put(0,1){\circle*{0.3}}
    \put(0,1){\line(1,0){1.5}}
    \put(0,1){\vector(1,0){0.9}}
    \qbezier(1.5,1)(2.5,1)(3,2)
    \qbezier(3,2)(3.5,3)(4.5,3)
    \put(5,3){\line(-1,0){0.5}}
    \put(4.1,3){\vector(-1,0){0}}
    \put(5,3){\circle*{0.3}}
    \put(0,3){\circle*{0.3}}
    \put(0,3){\line(1,0){1.2}}
    \put(0.4,3){\vector(-1,0){0}}
    \qbezier(1.8,3)(2.5,3)(2.8,2.6)
    \qbezier(3.2,1.6)(3.5,1)(4.5,1)
    \put(5,1){\line(-1,0){0.5}}
    \put(4.5,1){\vector(1,0){0}}
    \put(5,1){\circle*{0.3}}
    \linethickness{2pt}
    \put(1.5,0){\line(0,1){0.7}}
    \put(1.5,4){\line(0,-1){2.7}}
    \end{picture}
}.\]
In each case,
we can express the equation of diagrams
in terms of the generators of $H$,
using the method described in the proof of Lemma \ref{lem:Hgenerators}.
After some algebraic manipulation,
we obtain the desired relations.
\end{proof}

\section{A quotient of $H$}

Let $H'$ be the quotient of $H$
by the intersection of the kernels of all $\rho^{\otimes n}$.
The aim of this section is to prove the following.

\begin{theorem} \label{thm:presentation}
$H'$ has generators $e$, $f$, $k$ and $k^{-1}$,
which satisfy the relations:
$$kk^{-1} = k^{-1}k = 1$$
and
$$
ek = -q^{-1}ke,
\quad fk = -qkf,
\quad ef - fe = (q-q^{-1})(k^2 - k^{-2}).
$$
The Hopf algebra structure on $H'$ is given by the coproduct:
$$
\Delta(e) = e \otimes k + k^{-1} \otimes e,
\quad \Delta(f) = f \otimes k + k^{-1} \otimes f,
\quad \Delta(k^{\pm 1}) = k^{\pm 1} \otimes k^{\pm 1},
$$
the counit:
$$
\epsilon(e) = \epsilon(f) = 0,
\quad \epsilon(k^{\pm 1}) = 1,
$$
and the antipode:
$$
S(e) = qe,
\quad
S(f) = q^{-1}f,
\quad
S(k^{\pm 1}) = k^{\mp 1}.
$$
\end{theorem}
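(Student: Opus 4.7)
The plan is to construct a Hopf algebra isomorphism from the abstract algebra $\hat U$ presented by $e, f, k, k^{\pm 1}$ with the stated relations onto $H'$. I would first exhibit a surjective algebra map $\phi \colon \hat U \to H'$ by reducing the eight generators of $H$ from Lemma \ref{lem:Hgenerators} down to four in $H'$, and then prove $\phi$ is injective by comparing with the standard faithful representations of $\Uq$.

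The key reductions in $H'$ are $e_0 = 0$, $f_0 = 0$, $k' = k^{-1}$, $\ell = k$, and $\ell' = k^{-1}$. I would begin by proving $e_0 = 0$ in $H'$: for each $n$, the image $\rho^{\otimes n}(e_0)$ is the diagram with a horizontal ``out--out'' strand crossing $n$ parallel vertical strands under the pole. Cutting each vertical strand just below the horizontal, resolving each of the $n$ crossings via the crossing formula, applying the confetti relations to the identity resolutions (whose leftover horizontal segment is ``out--out'' and hence zero), and checking that the cup-cap resolutions all violate the ice rule at at least one crossing, shows that every surviving term cancels. The same template gives $f_0 = 0$. With $e_0 = 0$, the formula $\Delta(k) = k \otimes k + e_0 \otimes e$ from Lemma \ref{lem:Hhopf} reduces to $\Delta(k) = k \otimes k$, so $k$ is group-like in $H'$; combined with $S(k) = k'$ and the Hopf algebra axiom $\nabla \circ (S \otimes \id) \circ \Delta = \eta \circ \epsilon$, this gives $k'k = 1$, hence $k' = k^{-1}$. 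An analogous diagrammatic argument, using R2 and R3 at the pole together with $e_0 = f_0 = 0$, shows $\ell = k$ and $\ell' = k^{-1}$ in $H'$.

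With these identifications the stated relations of $\hat U$ follow almost mechanically from Lemma \ref{lem:Hrelations}: substituting $k' = k^{-1}$ into $ek' + q k' e = 0$ yields $ek = -q^{-1}ke$; the symmetric ``over-crossing'' calculation indicated just before Lemma \ref{lem:Hhopf} gives $fk = -qkf$; and substituting $\ell = k$, $\ell' = k^{-1}$, $k' = k^{-1}$ into $ef - fe = (q - q^{-1})(\ell k - k' \ell')$ produces $ef - fe = (q - q^{-1})(k^2 - k^{-2})$. The coproduct, counit, and antipode on $e$ and $k$ come directly from Lemma \ref{lem:Hhopf}; the corresponding formulas for $f$ come from the analogous computations on the ``over'' generators after the same identifications. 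Thus $\phi$ is a well-defined surjective Hopf algebra homomorphism.

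For injectivity, I would observe that $\hat U$ is isomorphic (after a harmless sign convention and viewing $k$ as a square root of the usual group-like generator) to $\Uq$, which for $q$ not a root of unity acts faithfully on $\bigoplus_{n \ge 0} V^{\otimes n}$, where $V$ is the standard $2$-dimensional simple module. This action factors through $\phi$ followed by the maps $\rho^{\otimes n}$, so any element of $\ker \phi$ lies in $\bigcap_n \ker \rho^{\otimes n}$, which is zero by the definition of $H'$. The principal obstacle in this plan is the uniform-in-$n$ diagrammatic verification that $e_0$, $f_0$, and the differences $\ell - k$, $\ell' - k^{-1}$ vanish under every $\rho^{\otimes n}$; keeping the combinatorics of the cuts, of the $2^n$ possible orientation assignments, and of the $2^n$ crossing resolutions straight, and organizing the ice-rule cancellations so the argument is genuinely uniform in $n$, is the step I would spend the most time on.
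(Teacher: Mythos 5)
Your overall architecture is the same as the paper's: kill $e_0$ and $f_0$ in $H'$, identify $k'=k^{-1}$, $\ell=k$, $\ell'=k^{-1}$, and then read the relations off Lemma \ref{lem:Hrelations} and the Hopf structure off Lemma \ref{lem:Hhopf} together with the Cartan symmetry. But the load-bearing step --- showing that $e_0$, $f_0$, $k-\ell$ and $k'-\ell'$ lie in $\bigcap_n \ker \rho^{\otimes n}$ --- is exactly the step you leave unfinished, and the mechanism you sketch for it is not right. After cutting the vertical strands and resolving the $n$ crossings, the surviving terms are \emph{not} individually killed by confetti or by ``ice rule violations'' (there is no relation in $\TLdot$ that sets an ice-rule-violating configuration to zero; the ice rule is only an observation about which terms survive). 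Already for $n=1$ one gets several nonzero terms that must cancel against each other through the interplay of the $\sqrt{-q}^{\pm 1}$ coefficients from the crossing resolution and the $\sqrt{q}^{\pm 1}$ factors from the turning relations. Organizing such cancellations uniformly in $n$ over all $2^n$ cuts and $2^n$ smoothings is genuinely hard, and you have not done it. The idea you are missing is the paper's Lemma \ref{lem:kernel}: induct on $n$ using the coproduct. Since $\rho^{\otimes n}=(\rho\otimes\rho^{\otimes(n-1)})\circ\Delta$ and $\Delta(e_0)=e_0\otimes k'+k\otimes e_0$, the vanishing of $\rho(e_0)$ and $\rho^{\otimes(n-1)}(e_0)$ immediately gives $\rho^{\otimes n}(e_0)=0$; the same trick, with the coproducts of $\ell,\ell',f_0$ obtained via the Cartan involution, handles $k-\ell$ and $k'-\ell'$. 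Only the cases $n=0,1$ require a direct diagram computation.

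Two smaller points. First, your injectivity step is superfluous: the theorem asserts only that the four elements generate $H'$ and satisfy the listed relations, not that these relations are defining. Second, as carried out it would also be wrong in scope: faithfulness of $\bigoplus_n V^{\otimes n}$ requires $q$ not a root of unity, a hypothesis absent from this theorem (the paper invokes it only in the final Corollary), and your $\hat U$ is not $\Uq$ but an extension of it by a square root of $K$ (compare Theorem \ref{thm:end}, where $\Uq$ is only the even-length-word subalgebra). Your alternative derivation of $k'=k^{-1}$ from group-likeness of $k$ and the antipode axiom is fine, though the paper gets it more directly from $k'k+q^{-1}ee_0=1$ and $kk'+qe_0e=1$ once $e_0=0$.
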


To save us some work,
we use the following symmetry of $H$.

\begin{definition}
The {\em Cartan involution} of $H$
is the linear map $\theta \colon H \to H$
that acts on a diagram
by rotating it $180$ degrees around the pole,
and reversing the direction of all arrows.
\end{definition}

Thus $\theta$ permutes the generators of $H$ as follows.
$$e \leftrightarrow f, \quad
  e_0 \leftrightarrow f_0, \quad
  k \leftrightarrow \ell', \quad
  k' \leftrightarrow \ell.$$

\begin{lemma} \label{lem:cartan}
The Cartan involution is a bialgebra automorphism of $H$,
preserves the kernel of $\rho^{\otimes n}$ for all $n$,
and satisfies $\theta \circ S = S^{-1} \circ \theta$.
\end{lemma}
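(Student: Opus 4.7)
The plan is to verify the three assertions in turn, each by checking how the combined operation ``rotate $180^\circ$ around the pole and reverse every arrow'' interacts with the defining data of $H$. First I would verify well-definedness on relations: the confetti, cutting, crossing, and pole-Reidemeister relations are manifestly invariant under horizontal reflection across the pole together with over/under swap and arrow reversal; the turning relations carry a factor of $\sqrt{q}$ whose exponent flips under arrow reversal, but the reflection component of $\theta$ also reverses which direction the vertex is being turned, flipping the exponent a second time so the two flips cancel. Once $\theta$ is well defined, the bialgebra axioms reduce to local geometric checks: stacking commutes with rotation around a vertical axis, giving $\theta(xy) = \theta(x)\theta(y)$; the bare pole is fixed, giving $\theta \circ \eta = \eta$; splitting the pole and then rotating each half around its own new pole is isotopic to rotating first and then splitting, giving $(\theta \otimes \theta) \circ \Delta = \Delta \circ \theta$; and after deleting the pole the closed diagram in $\TLdot$ becomes its $180^\circ$ spatial rotation with arrows reversed, which respects every relation and hence the scalar evaluation, giving $\epsilon \circ \theta = \epsilon$.

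For preservation of $\ker \rho^{\otimes n}$, I would introduce the analogous operation $\theta_n$ on $\mathrm{End}_{\TLdot}(n)$, obtained by rotating $180^\circ$ about a vertical axis through the middle of the $n$ strands and reversing arrows. The same well-definedness argument makes $\theta_n$ a linear involution. The identity $\rho^{\otimes n} \circ \theta = \theta_n \circ \rho^{\otimes n}$ then follows because both sides take a diagram in $H$, replace the pole by $n$ parallel strands, and rotate about that axis while reversing arrows; the $n$ strands play the same role as the pole. Since $\theta_n$ is an involution and hence has trivial kernel, $\ker \rho^{\otimes n}$ is $\theta$-stable.

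For the antipode identity, both $\theta \circ S$ and $S^{-1} \circ \theta$ act on a diagram by the same underlying planar operation, namely a rigid $180^\circ$ rotation composed with reflection across the pole plus arrow reversal; these two ambient operations commute, so it does not matter in which order they are applied. What distinguishes the two compositions is the $\sqrt{q}$-factor contributed by the antipode: $S$ multiplies by $\sqrt{q}^{\,n_{\mathrm{in}}-n_{\mathrm{out}}}$, whereas $S^{-1}$ multiplies by the inverse $\sqrt{q}^{\,n_{\mathrm{out}}-n_{\mathrm{in}}}$. Because $\theta$ reverses every arrow it swaps $n_{\mathrm{in}}$ with $n_{\mathrm{out}}$, so the scalar produced by $S$ applied before $\theta$ equals the scalar produced by $S^{-1}$ applied after $\theta$.

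The main obstacle I anticipate is the $\sqrt{q}$ bookkeeping, which enters both in showing that $\theta$ respects the turning relations and in matching the antipode scalars for the final identity. Each clockwise-versus-counterclockwise turn and each inward-versus-outward arrow at a vertex contributes its own power of $\sqrt{q}$, and one has to confirm that these cancellations work consistently across a diagram with arbitrarily many vertices.
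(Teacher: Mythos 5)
Your proposal is correct and follows essentially the same route as the paper's (much terser) proof: diagrammatic commutation of $\theta$ with the bialgebra operations, an induced involution on $\TLdot$ giving $\rho^{\otimes n}\circ\theta=\theta_n\circ\rho^{\otimes n}$ and hence stability of the kernels, and the observation that $\theta$ reverses the sense of the pole rotation (equivalently swaps $n_{\mathrm{in}}$ and $n_{\mathrm{out}}$ in the antipode's scalar), yielding $\theta\circ S=S^{-1}\circ\theta$. Your version simply supplies the well-definedness and $\sqrt{q}$-bookkeeping details that the paper leaves implicit.
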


\begin{proof}
The bialgebra operations on $H$
have diagrammatic descriptions
that commute with $\theta$.
We can also say $\rho^{\otimes n}$ commutes with $H$,
if we interpret $\theta$ as acting on $\TLdot$ in the obvious way.
Finally,
$\theta$ does not commute with $S$,
but instead reverses the direction of rotation of the pole
in the definition of $S$.
\end{proof}

\begin{lemma} \label{lem:kernel}
For all $n \ge 0$, we have:
\[
\rho^{\otimes n} (e_0) = \rho^{\otimes n} (f_0) = 0,
\quad \rho^{\otimes n} (k) = \rho^{\otimes n} (\ell),
\quad \rho^{\otimes n} (k') = \rho^{\otimes n} (\ell').
\]
\end{lemma}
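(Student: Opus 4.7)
The plan is to prove all four identities simultaneously by induction on $n$, using the Cartan involution of Lemma~\ref{lem:cartan} to halve the work and the coproduct formulas of Lemma~\ref{lem:Hhopf} (together with their $\theta$-images) to carry out the induction. Since $\theta$ preserves each $\ker\rho^{\otimes n}$ and satisfies $\theta(e_0)=f_0$ and $\theta(k-\ell)=\ell'-k'$, the identities about $f_0$ and about $k',\ell'$ reduce to those about $e_0$ and about $k,\ell$.

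For the base case $n=0$, applying $\rho^{\otimes 0}$ simply deletes the pole. The resulting horizontal strand for $e_0$ has both vertices oriented outward, so it equals $0$ by the first confetti picture. For both $k$ and $\ell$ it becomes a straight horizontal strand with one outward and one inward vertex in consistent orientation, so it equals $1$ by the third confetti picture. For the inductive step at $n\ge 2$, combine $\Delta(e_0) = e_0\otimes k' + k\otimes e_0$, $\Delta(k) = k\otimes k + e_0\otimes e$, and the $\theta$-image $\Delta(\ell) = \ell\otimes\ell + f\otimes f_0$. Splitting the pole as $1+(n-1)$ parallel strands via the coproduct gives
\[
\rho^{\otimes n}(e_0) = \rho(e_0)\otimes\rho^{\otimes(n-1)}(k') + \rho(k)\otimes\rho^{\otimes(n-1)}(e_0),
\]
and, after using $\rho^{\otimes(n-1)}(k)=\rho^{\otimes(n-1)}(\ell)$ from the hypothesis,
\[
\rho^{\otimes n}(k) - \rho^{\otimes n}(\ell) = (\rho(k)-\rho(\ell))\otimes\rho^{\otimes(n-1)}(k) + \rho(e_0)\otimes\rho^{\otimes(n-1)}(e) - \rho(f)\otimes\rho^{\otimes(n-1)}(f_0).
\]
Every term on the right vanishes by the inductive hypothesis together with the $n=1$ base cases $\rho(e_0)=0$ and $\rho(k)=\rho(\ell)$.

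The main obstacle is therefore the $n=1$ base case, which must be verified by direct diagrammatic computation in $\TLdot(1,1)$. Apply the crossing definition to the single crossing of each of $\rho(e_0)$, $\rho(k)$, and $\rho(\ell)$, writing it as $\tfrac{1}{\sqrt{-q}}V + \sqrt{-q}H$ (with the two coefficients swapped for $\rho(\ell)$, whose crossing is the opposite sign to the one in the paper's crossing definition). Each smoothing $V$ or $H$ is a two-arc diagram joining the interior vertices $W,E$ to the top and bottom endpoints $N,S$ of the $\TLdot(1,1)$ picture. Use the turning relations to straighten these arcs so that any two-vertex sub-strand is confetti-reducible, apply the confetti relations, and check that the two smoothings of $\rho(e_0)$ cancel and that those of $\rho(k)$ and $\rho(\ell)$ agree term by term. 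The fiddly bookkeeping of the powers of $\sqrt{q}$ that the turning moves introduce is what makes this step the hard part.
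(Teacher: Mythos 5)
Your argument is the same as the paper's, which is itself only a three-line sketch of exactly this plan: induct on $n$, handle $n=0$ by the confetti relations and $n=1$ by resolving the single crossing, and run the inductive step through the coproduct formulas of Lemma~\ref{lem:Hhopf} combined with the Cartan involution of Lemma~\ref{lem:cartan}; like the paper, you stop short of actually carrying out the $n=1$ bookkeeping, so you are no less complete than the original. Two cosmetic points: in the $n=1$ smoothings every arc joins a vertex to a boundary point of the $\TLdot(1,1)$ square, so only the turning relations (not confetti) enter there, and the cutting relation actually gives $\Delta(\ell)=\ell\otimes\ell+f_0\otimes f$ rather than $f\otimes f_0$ --- but since every error term in your displayed identity contains a factor of $\rho(e_0)$, $\rho^{\otimes(n-1)}(e_0)$, $\rho^{\otimes(n-1)}(f_0)$, or $\rho(k)-\rho(\ell)$, the induction closes either way.
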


\begin{proof}
The proof is by induction on $n$.
The case $n = 0$ is easy.
The case $n = 1$ is a simple computation
involving diagrams with a single crossing.
For $n > 1$,
use the formulae for the coproduct
taken from Lemma \ref{lem:Hhopf}
and Lemma \ref{lem:cartan}.
\end{proof}

To prove Theorem \ref{thm:presentation},
let $k^{-1} = k'$ and combine Lemmas
\ref{lem:Hhopf},
\ref{lem:Hrelations},
\ref{lem:cartan}
and \ref{lem:kernel}.

\section{Connection to $\Uq$}

In this section we make the connection between $\Uq$ and $H'$.

We use the definition of $\Uq$
given in \cite{kassel} and \cite{klimykschmudgen}.
The most interesting relations are:
\[
\begin{aligned}
&
EK = q^{-2}KE, \quad FK = q^2 KF,
  \quad EF - FE = (K-K^{-1})/(q-q^{-1}),
\\
&
\Delta(E) = 1 \otimes E + E \otimes K, 
  \quad \Delta(F) = K^{-1} \otimes F + F \otimes 1,
\\
&
S(E) = -EK^{-1}, \quad S(F) = -KF.
\end{aligned}
\]
The other relations are
the definition of $\epsilon$
and some obvious relations involving only $K^{\pm 1}$.

Define $\phi \colon \Uq \to H'$ by
$$
\phi(K^{\pm 1}) = k^{\pm 2}, \quad
\phi(E) = \frac{1}{q-q^{-1}} ek, \quad
\phi(F) = \frac{1}{q-q^{-1}} k^{-1}f.
$$

\begin{theorem}
\label{thm:end}
The above $\phi$ is a well-defined morphism of Hopf algebras.
The image of $\phi$ is the algebra of words of even length
in the generators $e$, $f$ and $k^{\pm 1}$.
The representation $\rho \circ \phi$ of $\Uq$
is isomorphic to $V_{-1,1}$.
The kernel of $\phi$ is
the intersection of the kernels of the representations
$V_{-1,1}^{\otimes n}$.
\end{theorem}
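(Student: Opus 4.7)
The proof has four parts: (1) $\phi$ is a well-defined Hopf morphism, (2) the image is the even-length subalgebra, (3) $\rho \circ \phi \cong V_{-1,1}$, and (4) the kernel is $\bigcap_n \ker V_{-1,1}^{\otimes n}$. I treat each in turn; part (3) is the diagrammatic content and I expect it to be the main obstacle.

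For well-definedness I would check the relations of $\Uq$ and the Hopf axioms using the presentation of $H'$ from Theorem \ref{thm:presentation}. The skew-commutations $EK = q^{-2}KE$ and $FK = q^2 KF$ reduce, via the identities $k^2 e = q^2 e k^2$ and $fk^2 = q^2 k^2 f$, which follow immediately from $ek = -q^{-1}ke$ and $fk = -qkf$. For the commutator, I would rewrite $\phi(F)\phi(E) = (q-q^{-1})^{-2} k^{-1} f e k$, use $fe = ef - (q-q^{-1})(k^2 - k^{-2})$ together with the consequence $k^{-1}(ef)k = ef$ of the skew-commutations, and conclude that $[\phi(E), \phi(F)] = (q-q^{-1})^{-1}(k^2 - k^{-2}) = \phi((K - K^{-1})/(q - q^{-1}))$. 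The coproduct, counit, and antipode on $K^{\pm 1}, E, F$ are then verified directly from the formulae of Theorem \ref{thm:presentation}, again using the skew-commutations to push $k$ past $e$ or $f$ when necessary.

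For the image I would introduce the $\mathbb{Z}/2$-grading on $H'$ in which $e, f, k^{\pm 1}$ each have degree $1$; the defining relations are homogeneous of even degree, so this grading is well-defined. All of $\phi(K^{\pm 1}) = k^{\pm 2}$, $\phi(E) = (q-q^{-1})^{-1}ek$ and $\phi(F) = (q-q^{-1})^{-1}k^{-1}f$ lie in the even part. For the reverse inclusion it suffices, by induction on word length, to show that every length-two monomial in $\{e, f, k^{\pm 1}\}$ lies in the subalgebra generated by $k^{\pm 2}, ek, k^{-1}f$: for example $ef = (ek)(k^{-1}f)$, $fe = ef - (q-q^{-1})(k^2 - k^{-2})$, and $ee = (ek)(k^{-1}e) = -q^{-1}(ek)(ek)k^{-2}$ using $k^{-1}e = -q^{-1}ek^{-1}$. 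The mixed monomials involving $k^{\pm 1}$ are handled by the same skew-commutation moves, exhausting all sixteen cases.

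For the representation I would fix the basis of $\mathrm{End}(1)$ in $\TLdot$ given by the two oriented cuts from the cutting relation and compute directly. Expanding the crossing in $k$ by the Kauffman formula, contracting the resulting confetti with the rightward orientation of the horizontal strand, and applying the turning relations yields $\rho(k) = \mathrm{diag}(\sqrt{-q},\, 1/\sqrt{-q})$ in this basis, hence $\rho(\phi(K)) = \rho(k^2) = \mathrm{diag}(-q,\, -q^{-1})$, matching the action of $K$ on $V_{-1,1}$ from \cite{kassel}. Entirely analogous calculations for $\rho(ek)$ and $\rho(k^{-1}f)$ produce the standard raising and lowering matrices. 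The chief difficulty here is keeping track of the powers of $\sqrt{q}$ and $\sqrt{-q}$ arising from the turning and Kauffman expansions, since these are exactly what force the twist $-1$ to appear in $V_{-1,1}$ rather than in $V_{1,1}$. For the kernel, because $\phi$ is a Hopf morphism and $\rho^{\otimes n}$ is defined through the iterated coproduct, part (3) upgrades to $\rho^{\otimes n} \circ \phi \cong V_{-1,1}^{\otimes n}$ for every $n \ge 0$. By the definition of $H'$, the zero element of $H'$ is represented exactly by elements of $H$ lying in $\bigcap_n \ker \rho^{\otimes n}$, so
\[
\ker \phi = \phi^{-1}(0) = \bigcap_n \phi^{-1}(\ker \rho^{\otimes n}) = \bigcap_n \ker(\rho^{\otimes n} \circ \phi) = \bigcap_n \ker V_{-1,1}^{\otimes n},
\]
as claimed.
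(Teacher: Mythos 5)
Your proposal is correct, and three of its four parts (well-definedness by checking the defining relations, the even-length description of the image via parity and length-two monomials, and the formal identification of $\ker\phi$ with $\bigcap_n\ker(\rho^{\otimes n}\circ\phi)$ using that $\phi$ is a coalgebra map) follow essentially the same lines as the paper, just with the computations written out. The one genuine divergence is the identification of $\rho\circ\phi$ with $V_{-1,1}$: you propose a full matrix computation of $\rho(k)$, $\rho(ek)$, $\rho(k^{-1}f)$ in a basis of oriented cut strands, whereas the paper argues by elimination --- a two-dimensional representation is trivial, $V_{1,1}$, or $V_{-1,1}$; it is nontrivial since $\rho\circ\phi(E)\neq 0$; and the single relation $\rho\circ\phi(KE)=(-q)\,\rho\circ\phi(E)$ forces the $K$-eigenvalue $-q$ and hence $V_{-1,1}$. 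The paper's route buys brevity and sidesteps exactly the $\sqrt{q}$, $\sqrt{-q}$ bookkeeping you flag as the main hazard, at the cost of invoking the classification of two-dimensional $\Uq$-modules; your route is self-contained and yields the explicit matrices, and your $\rho(\phi(K))=\mathrm{diag}(-q,-q^{-1})$ is consistent with the paper's eigenvalue. One small imprecision: $\mathrm{End}(1)$ in $\TLdot$ is four-dimensional (the four oriented cut diagrams are the four matrix units), so ``the two oriented cuts from the cutting relation'' span only the diagonal; this does not affect your argument since the two weight vectors they single out are exactly what you need.
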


\begin{proof}
To check that $\phi$ is well defined,
simply check all of the defining relations of $\Uq$.
To see that the image of $\phi$ is as claimed,
note that it is easy to convert any word of even length
to a power of $-q$ times a word in the image of $\phi$.

We can compute $\rho \circ \phi$ completely,
or just enough to identify it by a process of elimination.
We know that $\rho \circ \phi$ is a two-dimensional representation,
so it is either trivial, $V_{1,1}$, or $V_{-1,1}$.
But
$$\rho \circ \phi(E) \neq 0,$$
so it is not trivial.
Also,
$$\rho \circ \phi(KE) = (-q) \rho \circ \phi(E)$$
so $K$ has an eigenvalue $-q$,
and the representation must be $V_{-1,1}$.

The statement about the kernel of $\phi$
follows immediately from the definition of $H'$.
\end{proof}

\begin{corollary}
If $\mathbb{F} = \mathbb{C}$ and $q$ is not a root of unity
then $\Uq$ is isomorphic to the algebra of words of even length 
in the generators $e$, $f$ and $k^{\pm 1}$ of $H'$.
\end{corollary}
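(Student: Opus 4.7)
The plan is to reduce the corollary to showing that the map $\phi$ of Theorem \ref{thm:end} is injective, and to deduce injectivity from the standard fact that $\Uq$ (at non-root-of-unity $q$ over $\mathbb{C}$) acts faithfully on the direct sum of its finite-dimensional irreducible representations.

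First I would note that Theorem \ref{thm:end} already identifies the image of $\phi$ as the subalgebra of even-length words in $e$, $f$, $k^{\pm 1}$, so only injectivity of $\phi$ remains to be established. Theorem \ref{thm:end} also tells us that $\ker\phi$ equals the intersection of the kernels of the representations $V_{-1,1}^{\otimes n}$, so the task is to show that this intersection is zero.

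Next I would analyze the representations $V_{-1,1}^{\otimes n}$. Under the hypothesis that $q$ is not a root of unity and $\mathbb{F}=\mathbb{C}$, the category of finite-dimensional $\Uq$-modules is semisimple and its simple objects are the modules $V_{\pm 1, m}$ for $m \ge 0$. A direct computation of highest weights (or an induction using the Clebsch--Gordan rule $V_{\epsilon,1} \otimes V_{\eta,m} \cong V_{\epsilon\eta,m+1} \oplus V_{\epsilon\eta,m-1}$) shows that every simple module $V_{\pm 1,m}$ appears as a direct summand of some tensor power $V_{-1,1}^{\otimes n}$: even tensor powers produce the summands with trivial sign character and odd tensor powers produce the ones twisted by the sign. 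Consequently, an element of $\Uq$ that acts as zero on every $V_{-1,1}^{\otimes n}$ acts as zero on every finite-dimensional irreducible.

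Finally I would invoke the standard result (proved, for instance, in Kassel or Klimyk--Schm\"udgen) that over $\mathbb{C}$ with $q$ not a root of unity the natural map from $\Uq$ to $\prod_{m\ge 0}\bigl(\mathrm{End}(V_{1,m}) \oplus \mathrm{End}(V_{-1,m})\bigr)$ is injective, i.e.\ $\Uq$ acts faithfully on the sum of its finite-dimensional irreducibles. Combined with the previous step, this forces $\ker\phi = 0$. Since $\phi$ is a morphism of Hopf algebras onto the subalgebra in question, it is therefore an isomorphism of algebras (and in fact of Hopf algebras) onto the algebra of even-length words in $e$, $f$, $k^{\pm 1}$.

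The main obstacle is the representation-theoretic input: verifying both that every finite-dimensional simple $\Uq$-module occurs in some $V_{-1,1}^{\otimes n}$ and that $\Uq$ acts faithfully on the totality of its finite-dimensional irreducibles. Neither step is deep at generic $q$, but both rely on facts external to the diagrammatic framework developed in the paper, so one must be content to cite them rather than reprove them here.
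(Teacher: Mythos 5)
Your overall strategy matches the paper's: reduce the corollary to injectivity of $\phi$, use the identification of $\ker\phi$ with $\bigcap_n \ker\bigl(V_{-1,1}^{\otimes n}\bigr)$ from Theorem \ref{thm:end}, and then appeal to generic-$q$ representation theory to show this intersection vanishes. However, your bridging claim --- that \emph{every} simple module $V_{\pm 1,m}$ occurs as a summand of some $V_{-1,1}^{\otimes n}$ --- is false, and the Clebsch--Gordan rule you cite is exactly what refutes it. Since $V_{\epsilon,1}\otimes V_{\eta,m}\cong V_{\epsilon\eta,m+1}\oplus V_{\epsilon\eta,m-1}$, an easy induction shows the summands of $V_{-1,1}^{\otimes n}$ are precisely the $V_{(-1)^n,m}$ with $m\equiv n \pmod 2$. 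The sign and the parity of $m$ are therefore locked together: only $V_{1,m}$ with $m$ even and $V_{-1,m}$ with $m$ odd ever appear. In particular $V_{1,1}$ (and more generally $V_{1,m}$ for odd $m$, and $V_{-1,m}$ for even $m$) occurs in no tensor power of $V_{-1,1}$. Consequently your inference ``kills every $V_{-1,1}^{\otimes n}$ $\Rightarrow$ kills every finite-dimensional irreducible'' does not go through, and the faithfulness statement you invoke (injectivity of $\Uq\to\prod_m \mathrm{End}(V_{1,m})\oplus\mathrm{End}(V_{-1,m})$) cannot be applied directly.

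The gap is repairable, and the paper's proof shows one way: by \cite[Theorem 7.13]{klimykschmudgen} the representations $V_{1,n}$ alone separate points, each $V_{1,n}$ is a summand of $V_{1,1}^{\otimes n}$ (here the parity bookkeeping works out, since the sign is always $+1$), and then the isomorphism $\Uq\cong\Unegq$ exchanging $V_{1,1}$ and $V_{-1,1}$ transfers the conclusion from tensor powers of $V_{1,1}$ to tensor powers of $V_{-1,1}$. If you prefer to avoid that twist, you would instead need to prove directly that the restricted family $\{V_{1,m}: m \text{ even}\}\cup\{V_{-1,m}: m \text{ odd}\}$ already separates points of $\Uq$; that is true but is an additional statement requiring justification, not a consequence of the faithfulness result as you have quoted it.
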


\begin{proof}
We must show that $\phi$ is injective.
We use basic properties of representations of $\Uq$
when $q$ is not a root of unity.

By \cite[Theorem 7.13]{klimykschmudgen},
the intersection of the kernels of the representations $V_{1,n}$.
is trivial.
But $V_{1,n}$ is a summand of $V_{1,1}^{\otimes n}$,
so the intersection of the kernels of tensor powers of $V_{1,1}$
is trivial.
There is an isomorphism from $\Uq$ to $\Unegq$
that switches $V_{-1,1}$ and $V_{1,1}$,
so the intersection of the kernels of tensor powers of $V_{-1,1}$
is also trivial.
Thus the kernel of $\phi$ is trivial.
\end{proof}

Even if $q$ is a root of unity,
Theorem \ref{thm:intertwine} shows that
$\TL$ is a category of
all tensor powers of the representation $V_{-1,1}$ of $\Uq$,
and some of the morphisms between them.
If $q$ is not a root of unity then
$\TL$ includes {\em all} such morphisms,
but I do not know a diagrammatic proof of this fact.

\end{document}